\newtheorem{thm}{Theorem}
\newtheorem{lem}{Lemma}
\newcommand\subsetsim{\mathrel{%
  \ooalign{\raise0.2ex\hbox{$\subset$}\cr\hidewidth\raise-0.9ex\hbox{\scalebox{0.9}{$\sim$}}\hidewidth\cr}}}
\newcommand{\Z}{{\mathbb Z}}
\title[A note on iterated sumsets races]{A note on iterated sumsets races}
\author{Paul P\'eringuey}
\address{Department of Mathematics, University of British Columbia, Vancouver, British Columbia, Canada\\ Pacific Institute for the Mathematical Sciences, Vancouver, British Columbia, Canada.}
\email{peringuey@math.ubc.ca}
\author{Anne de Roton}
\address{Universit\' e de Lorraine, Institut Elie Cartan de Lorraine, CNRS, Nancy, F-54000, France.}
\email{anne.de-roton@univ-lorraine.fr}
\begin{document}

\begin{abstract}
This short note answers a question raised by Nathanson \cite{Nath25} about "races" between iterated sumsets. We prove that for any integer $n$, there are finite sets of integers $A$ and $B$ with same diameter such that the signs of the elements of the sequence $(|hA|-|hB|)_h$ changes at least $n$ times. Kravitz proved in \cite{Kravitz} a much better result. This brief and modest note may serve as a stepping stone towards his work.\\  
\textbf{Keywords:} Additive combinatorics, sumsets, iterated sums.\\
\textbf{Mathematic Subject Classification:} 11P70.
\end{abstract}

\maketitle

\section{Introduction}

Given a nonempty subset $A$ of integers and a positive integer $h$, the $h$-fold sumset of the set $A$, denoted by $hA$ is the set of all sums of $h$ elements of $A$ 
$$hA=\{a_1+\cdots+a_h,\,{ with }\,a_i\in A \mbox{ for all }i\in [1,h]\}.$$
The size (or cardinality) of $hA$ will be the number of elements in $hA$ and denoted by $|hA|$.\\
In \cite{Nath25} Nathanson investigates inverse problems for the sequence of sumset sizes $(|hA|)_{h\ge 1}$ and especially the relation between $(|hA|)_{h\ge 1}$ and $(|hB|)_{h\ge 1}$ for sets $A$ and $B$ of the same size. 
He raises the following question.\\
{\bf Problem 2 Nathanson}\\
{\it For every integer $m\ge 3$ do there exist finite sets $A$ and $B$ of integers and an increasing sequence of positive integers $h_1<h_2<\cdots<h_m$ such that 
$$|h_i A|>|h_iB| \mbox{ if $i$ is odd}$$ 
and
$$|h_i A|<|h_iB| \mbox{ if $i$ is even.}$$ 
Do there exist such sets with $|A|=|B|$? Do there exist such sets with $|A|=|B|$ and ${\rm diam}(A)={\rm diam}(B)$ ?}\\
\\
In other words, Nathanson investigated "races" between sumsets, a problem that is part of a broader collection of questions he posed in a series of papers, including \cite{PANT6}.\\
In \cite{Nath25}, Nathanson gives an example of sets $A$ and $B$ such that $|A|>|B|$, ${\rm diam}(A)>{\rm diam}(B)$ and there exist integers $h_2$ and $h_3$ such that $|hA|<|hB|$ for $2\le h\le h_2$ and $|hA|>|hB|$ for $h\ge h_3$. This gives a partial answer to Problem 2 for $m=3$. \\ 
\\
In this paper, we will solve Nathanson's Problem 2.
\begin{thm}\label{main}
Let $m\ge 3$ be an integer. There exist finite sets $A$ and $B$ of integers and an increasing sequence of positive integers $h_1<h_2<\cdots<h_m$ such that 
$|A|=|B|$, ${\rm diam}(A)={\rm diam}(B)$ 
and
$$|h_i A|>|h_iB| \mbox{ if $i$ is odd}$$ 
and
$$|h_i A|<|h_iB| \mbox{ if $i$ is even.}$$ 
\end{thm}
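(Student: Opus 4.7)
Our proof would be constructive. Given $m \geq 3$, we sketch a scale-separated construction. We choose increasing integers $h_1 < h_2 < \cdots < h_m$ and, for each $i$, a pair $(C_i, D_i)$ of finite integer sets with $|C_i| = |D_i|$, ${\rm diam}(C_i) = {\rm diam}(D_i)$, tailored so that $|h_i C_i| - |h_i D_i|$ has sign $(-1)^{i+1}$; small pairs with a single controlled sign discrepancy at a prescribed $h$ are readily produced by inspection of small subsets of $\Z$ (possibly rescaled to handle large $h_i$). For $N$ a sufficiently large integer we set
\[
A = \bigcup_{i=1}^m (N^i + C_i), \qquad B = \bigcup_{i=1}^m (N^i + D_i).
\]
For $N$ much larger than $h_m \cdot \max_i {\rm diam}(C_i)$, the blocks $N^i + C_i$ are pairwise disjoint, yielding $|A| = |B|$ and ${\rm diam}(A) = {\rm diam}(B)$.

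The key structural observation is the following decomposition of $|hA|$: each element of the $h$-fold sumset $hA$ corresponds uniquely to a composition $(h'_1, \ldots, h'_m)$ of $h$ (specifying how many summands are drawn from each block) together with an element of the Minkowski sum $h'_1 C_1 + \cdots + h'_m C_m$. Since each $h'_i \leq h < N$, the uniqueness of base-$N$ representations ensures that different compositions contribute to disjoint ``slots'', whence
\[
|hA| = \sum_{\substack{(h'_1, \ldots, h'_m)\\ h'_1 + \cdots + h'_m = h}} |h'_1 C_1 + \cdots + h'_m C_m|,
\]
with the analogous formula for $|hB|$. Consequently $|hA| - |hB|$ decomposes as a sum over compositions of $h$, each term being a difference of two Minkowski sumset sizes.

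At $h = h_j$, the ``pure'' composition $(0, \ldots, 0, h_j, 0, \ldots, 0)$ (all weight in the $j$-th slot) contributes the term $|h_j C_j| - |h_j D_j|$, of sign $(-1)^{j+1}$ by design of $(C_j, D_j)$. The main technical obstacle is to guarantee that this term (or, more honestly, the entire sum) has the prescribed sign despite the many mixed compositions producing contributions of a priori competing signs. Overcoming this requires a careful choice of the modules $(C_i, D_i)$: one would like $|hC_i| - |hD_i|$ to be small or to vanish for $h \neq h_i$, so that cross-terms cancel; one must also estimate the ``mixed Minkowski'' sums $|h'_1 C_1 + \cdots + h'_m C_m| - |h'_1 D_1 + \cdots + h'_m D_m|$ when several $h'_i$ are positive. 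Exhibiting concrete modules satisfying these conditions and carrying out the resulting sign bookkeeping is the heart of the argument.
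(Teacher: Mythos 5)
Your approach is genuinely different from the paper's. You attempt a direct, one-shot construction: stack $m$ ``module'' pairs $(C_i,D_i)$ at separated base-$N$ scales, so that the race between $A$ and $B$ becomes a superposition of $m$ independent races. The paper instead proceeds by induction on $m$: starting from sets $A,B$ realizing $m$ sign changes, it forms $A'=\bigcup_{j\in I}(j\tau+A)$ and $B'=\bigcup_{j\in J}(j\tau+B)$, where the index sets $I=[0,2]\cup\{2H\}$ and $J=[0,1]\cup[2H-1,2H]$ have the same size and diameter, satisfy $|hI|=|hJ|$ for all $h<H$ (which, via Lemma~\ref{small_h}, preserves the first $m$ sign changes), yet satisfy $|h_{m+1}I|<|h_{m+1}J|$ at $h_{m+1}=2H-2$ (which, via Lemma~\ref{lem_nath}, forces one new sign change). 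Each induction step only has to manage \emph{one} new race, with the \emph{same} pair $(I,J)$ modulating both sides.

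That said, your proposal has a genuine gap, and it is the gap you yourself flag as ``the heart of the argument.'' The decomposition
\[
|hA|=\sum_{h'_1+\cdots+h'_m=h}\bigl|h'_1C_1+\cdots+h'_mC_m\bigr|
\]
is plausible for $N$ large, but $|hA|-|hB|$ is then a sum over all weak compositions of $h$, and you give no mechanism to control the mixed terms $|h'_1C_1+\cdots+h'_mC_m|-|h'_1D_1+\cdots+h'_mD_m|$. These are differences of Minkowski sums of \emph{distinct} modules, for which neither Nathanson's structure lemma nor any elementary count gives a sign; their number grows combinatorially in $h$; and there is no reason a priori that the single ``pure'' term $|h_jC_j|-|h_jD_j|$ dominates. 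Moreover, the requirement that $|hC_i|-|hD_i|$ ``be small or vanish for $h\neq h_i$'' is not something you can get for free: once $|C_i|=|D_i|$ and $\mathrm{diam}(C_i)=\mathrm{diam}(D_i)$ are imposed, Lemma~\ref{lem_nath} forces $|hC_i|-|hD_i|$ to stabilize to a constant for large $h$, and controlling it at all intermediate $h$ while prescribing its sign at one specific $h_i$ is exactly the kind of fine control that makes this problem nontrivial. Without exhibiting explicit modules and carrying out the sign bookkeeping, the argument is an outline, not a proof. The paper avoids the issue entirely by never having to compare Minkowski sums of heterogeneous blocks: at each induction step both $A'$ and $B'$ use the same translation structure, only the inner sets $A,B$ (whose race is already settled) and the index sets $I,J$ (whose race is controlled by two explicit formulas) differ.
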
 

After writing this short note, we discovered that Noah Kravitz \cite{Kravitz} had, in January 2025, posted a comprehensive solution to Nathanson's Problem 2. His work addresses a more general and complex version of the problem, as he considers races between $k$ sets, rather than just two as in our study. Most notably, he proves that the sequence $(h_i)_i$ can be prescribed. Moreover, Kravitz extends his investigation beyond the integers, exploring sumsets in arbitrary infinite abelian groups.\\
In \cite{FKZ} Jacob Fox, Noah Kravitz and Shengtong Zhang go even further. They show that the values of $|hA|-|hB|$ can be prescribed and investigate the question of finding the smallest cardinality and diameter of sets which realize this prescribed values.\\
From every perspective, Kravitz’s results and his work with Fox and Zhang go well beyond what we present here. Nonetheless, we hope this brief and modest note may serve as a stepping stone, in a simpler setting, toward a better understanding of Nathanson's problems and Kravitz’s solution.

\section{Preliminary results}
While the growth of the first iterated sumsets can be rather erratic, we know that its ultimate growth is linear with a growth rate depending on the diameter. The following lemma due to Nathanson \cite{Nathanson1972} provides a description of this phenomenon. 
\begin{lem}[Nathanson]\label{lem_nath}
Let $A$ be a finite subset of integers with $\min(A)=0$, $\max(A)=N$ and $gcd(A)=1$. 
There exists some integer $h_0$, integers $b,c\ge 0$ and sets $A_1\subset [0,b-2]$, $A_2\subset [0,c-2]$ such that
$$\forall h\ge h_0, \quad hA=A_1\cup [b,hN-c]\cup(hN-A_2).$$ 
\end{lem}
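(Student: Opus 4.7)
The plan is to combine Frobenius's theorem on numerical semigroups with a reflection argument. Since $0 \in A$ one has $hA \subset (h+1)A$, so the sets $hA$ form an increasing chain whose union $P_\infty$ is the numerical semigroup generated by $A$. Because $\gcd(A) = 1$, this semigroup is cofinite in $\N$, so there is a smallest integer $b$ with $[b,+\infty) \cap \N \subset P_\infty$; by minimality $b - 1 \notin P_\infty$, and the set $A_1 := P_\infty \cap [0, b-1]$ lies in $[0, b-2]$ as required.

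Next I would show that, for $h$ large, every integer in $[b, hN - c]$ actually lies in $hA$ (not merely in $P_\infty$)\,---\,this is where the constant $c$ must be chosen carefully. For each residue class $r \pmod N$, let $f_r$ be the smallest element of $P_\infty$ congruent to $r$ modulo $N$, realized as a sum of $t_r$ elements of $A \setminus \{0\}$. Any $n$ with $n \equiv r \pmod N$ and $n \geq f_r$ can be written as $f_r + kN$ with $k \geq 0$, i.e.\ as a sum of $t_r + k$ non-zero elements of $A$ (using $k$ extra copies of $N$); padding with zeros yields a representation with exactly $h$ summands whenever $h \geq t_r + (n - f_r)/N$. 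This rearranges to $n \leq hN - (t_r N - f_r)$, so it suffices to take $c$ at least $\max_r (t_r N - f_r)$, and this maximum is finite because there are only finitely many residues modulo $N$.

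The upper endpoint is then handled by symmetry: I would apply the same analysis to the reflected set $A^\ast := \{N - a : a \in A\}$, which satisfies the same hypotheses ($\min = 0$, $\max = N$, $\gcd = 1$) and whose iterated sumset is $hA^\ast = hN - hA$. The stabilization of the bottom of $hA^\ast$ proved in the first two steps translates, after reflection by $hN$, into stabilization of the top of $hA$: there exist $c \geq 0$ and $A_2 \subset [0, c-2]$ such that $hA \cap [hN - c + 1, hN] = hN - A_2$ for $h$ large. Possibly enlarging the earlier $c$ to the maximum of the two contributions ensures a common value works for both the middle interval and the top slice.

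Finally, for $h_0$ large enough that $hN - c \geq b$ and that both endpoints have stabilized, the three pieces $A_1$, $[b, hN - c]$, $hN - A_2$ are pairwise disjoint and jointly cover $hA$. The main obstacle is really just the uniformity in Step 2: one must control $t_r$ and $f_r$ independently of $r$, which is immediate since there are finitely many residues, and then verify that the lower and upper stabilizations occur by a common $h_0$. Both points are bookkeeping rather than conceptual; the essential content is that $\gcd(A) = 1$ reduces the problem to the classical Frobenius/Chicken-McNugget statement.
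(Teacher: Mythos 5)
The paper itself does not prove this lemma; it simply cites Nathanson (1972), so you are reconstructing the argument from scratch, and your strategy (numerical semigroup plus reflection) is indeed the standard one. The first three steps are sound. The gap is in the final sentence of Step 4, ``possibly enlarging the earlier $c$ to the maximum of the two contributions.'' Write $b^\ast$ for the analogue of $b$ for the reflected set $A^\ast$, and $c_1 = \max_r (t_r N - f_r)$ for the constant you produce in Step 2. The conclusion of the lemma forces $c = b^\ast$: if you took $c > b^\ast$, then for large $h$ the set $A_2 := hA^\ast \cap [0,c-1]$ would contain $c-1$ (because $c-1 \ge b^\ast$ lies in the tail of the semigroup generated by $A^\ast$), so $A_2 \not\subset [0,c-2]$, violating the statement. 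But $c_1$ can genuinely exceed $b^\ast$; for instance with $A = \{0,1,N\}$ one has $f_r = r$ and $t_r = r$ for $1 \le r \le N-1$, giving $c_1 = (N-1)^2$, while $b^\ast$ is one more than the Frobenius number of $\langle N-1, N\rangle$, i.e.\ $b^\ast = (N-1)(N-2) < c_1$. So Step 2 alone does not cover the interval $[hN - c_1 + 1,\, hN - b^\ast]$, and enlarging $c$ is not an option.

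The fix is cheap and uses a tool you already have: apply Step 2 to $A^\ast$ as well, obtaining a constant $c_1^\ast$ with $[b^\ast, hN - c_1^\ast] \subset hA^\ast$ for $h$ large, and reflect to get $[c_1^\ast, hN - b^\ast] \subset hA$. For $h$ large enough that $c_1^\ast \le hN - c_1 + 1$, the union $[b, hN - c_1] \cup [c_1^\ast, hN - b^\ast]$ contains all of $[b, hN - b^\ast]$, which is exactly the middle interval needed with $c = b^\ast$. Combined with the containment $hA \subset P_\infty \cap (hN - P_\infty^\ast)$, which for large $h$ equals $A_1 \cup [b, hN - b^\ast] \cup (hN - A_2)$, this closes the argument. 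So the conceptual skeleton is right, but the ``enlarge $c$'' step must be replaced by ``extend the coverage of the middle interval via the reflected Step 2.''
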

This lemma implies that for $h\ge h_0$, $|hA|=hN-(b-|A_1|+c-|A_2|)+1$. This ultimately implies that the set with the largest diameter will prevail in the race, this will be crucial for our first construction.\\
\\
The following lemma shows that, under suitable conditions, the growth of the first iterated sumsets of a subset $\cup_{j\in I}(j\tau+A)$ of $\Z$ mirrors the growth of the iterated sumsets of the subset $I\times A$ of $\Z^2$. 
\begin{lem}\label{small_h}
Let $A$ be a finite subset of integers with $\min(A)=0$, $\max(A)=N$. 
Let $I$ be a finite subset of integers and $\tau>N$ be an integer.\\ 
For any positive integer $h<\tau/N$, the set $A'=\cup_{j\in I}(j\tau+A)$ satisfies $|hA'|=|hI|\cdot |hA|$.
\end{lem}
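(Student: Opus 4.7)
The plan is to prove the lemma by explicitly parametrizing $hA'$ and showing this parametrization is a bijection under the stated hypothesis.

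First I would rewrite $A'$ as $A' = \{j\tau + a : j\in I,\ a \in A\}$, so that every element of $hA'$ is of the form
$$\sum_{k=1}^h (j_k\tau + a_k) = \Bigl(\sum_{k=1}^h j_k\Bigr)\tau + \sum_{k=1}^h a_k$$
with $j_k \in I$ and $a_k \in A$. This exhibits a natural surjection
$$\Phi \colon hI \times hA \longrightarrow hA', \qquad (s,t)\longmapsto s\tau + t.$$
Hence $|hA'| \leq |hI|\cdot |hA|$ automatically, and the content of the lemma is that $\Phi$ is injective.

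The key step is the injectivity, and this is exactly where the hypothesis $h < \tau/N$ enters. Suppose $\Phi(s,t)=\Phi(s',t')$, i.e.\ $(s-s')\tau = t'-t$. Since $hA\subset [0,hN]$, any two elements $t,t'\in hA$ satisfy $|t'-t|\leq hN$. Combining these gives $|s-s'|\tau \leq hN < \tau$, using $h < \tau/N$. This forces $s = s'$ and then $t = t'$, so $\Phi$ is injective and $|hA'| = |hI|\cdot |hA|$.

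There is no real obstacle here; the only thing to check carefully is that the quantitative bound $hN < \tau$ is strict enough to separate the "blocks" $s\tau + [0,hN]$ for different values of $s\in hI$. The assumption $\tau > N$ plus $h < \tau/N$ ensures precisely this, so the two coordinates $s \in hI$ and $t\in hA$ can be recovered uniquely from $s\tau+t$, and the cardinalities multiply as claimed.
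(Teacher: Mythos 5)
Your proof is correct and follows essentially the same route as the paper: the paper writes $hA' = \bigcup_{j\in hI}(j\tau + hA)$ and observes the translates are pairwise disjoint since $hN < \tau$, which is exactly the disjointness of the blocks $s\tau + [0,hN]$ that you establish via the injectivity of $\Phi$. Your phrasing as an explicit bijection $hI\times hA \to hA'$ is a slightly more detailed packaging of the same idea.
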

\begin{proof}
If $A'=\cup_{j\in I}(j\tau+A)$ then 
$hA'=\cup_{j\in hI}(j\tau+hA)$ and the sets $j\tau+hA$ are distinct as long as $hN<\tau$. This implies
 $$|hA'|=\sum_{j\in hI}|j\tau+hA|=\sum_{j\in hI}|hA|=|hI|\cdot|hA|.$$
\end{proof}

The following Lemma describes the ultimate growth of the iterated sumsets of a union of translates of a subset $A$ of $\Z$. 
\begin{lem}\label{h_large}
Let $A$ be a finite subset of integers with $\min(A)=0$, $\max(A)=N$ and $gcd(A)=1$. 
\\There exist some integers $\delta\in[0,N+1]$, $\gamma\ge 0$ and $h_0\ge 1$ such that
for all positive integer $r\ge 2$ and $\tau\ge\gamma$, the set $A'=\cup_{j=0}^{r-1}(j\tau+A)$ satisfies
for all $h\ge \max\left(h_0,\frac{\gamma+\tau-1}N\right)$
$$|hA'|=h((r-1)\tau+N)+1-\delta$$  and $hA'$ contains an interval of length $h(N+(r-1)\tau)-\gamma+1.$

\end{lem}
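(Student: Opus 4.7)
The natural approach is to reduce $hA'$ to a union of translates of $hA$ and then apply Nathanson's Lemma~\ref{lem_nath}. Each element of $hA'$ can be written as $s\tau+y$ with $s=j_1+\cdots+j_h\in\{0,\ldots,h(r-1)\}$ and $y=a_1+\cdots+a_h\in hA$, and conversely every such pair is realized. This yields the decomposition
\[
hA'=\bigcup_{s=0}^{h(r-1)}\bigl(s\tau+hA\bigr).
\]

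Next, feed in Nathanson's description $hA=A_1\cup[b,hN-c]\cup(hN-A_2)$ (with $A_1\subseteq[0,b-2]$, $A_2\subseteq[0,c-2]$, valid for $h\ge h_0$), and set $\gamma:=b+c$ and $\delta:=(b-|A_1|)+(c-|A_2|)\ge 0$, both depending only on $A$. The key computation is that the hypothesis $h\ge(\gamma+\tau-1)/N$ amounts exactly to $(s+1)\tau+b\le s\tau+hN-c+1$, so the middle intervals $[s\tau+b,s\tau+hN-c]$ indexed by $s=0,\ldots,h(r-1)$ overlap (or just touch) consecutively and merge into the single interval $\bigl[b,h((r-1)\tau+N)-c\bigr]$.

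The main work is the absorption step: one must show that the ``noise'' pieces $s\tau+A_1$ for $s\ge 1$ and $s\tau+hN-A_2$ for $s\le h(r-1)-1$ already sit inside this central interval, while the two extremal pieces (namely $A_1$ at $s=0$ and $h(r-1)\tau+hN-A_2$ at $s=h(r-1)$) stick out on the left and right respectively. Each inclusion boils down to one of the inequalities $\tau\ge b$, $\tau\ge c$, $hN\ge b+c-2$, $h(r-1)\tau\ge c$, all of which follow from $\tau\ge\gamma=b+c$ and from taking $h_0$ sufficiently large. Putting the pieces together gives
\[
hA'=A_1\cup\bigl[b,h((r-1)\tau+N)-c\bigr]\cup\bigl(h((r-1)\tau+N)-A_2\bigr),
\]
from which $|hA'|=|A_1|+(h((r-1)\tau+N)-c-b+1)+|A_2|=h((r-1)\tau+N)+1-\delta$ and the central interval has length $h((r-1)\tau+N)+1-\gamma$. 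The only mildly delicate point is the bookkeeping of the four absorption inequalities; they force the choice $\gamma=b+c$ and nothing smaller. The bound $\delta\in[0,N+1]$ is a minor refinement that plays no role in the ensuing cardinality identities used later in the paper.
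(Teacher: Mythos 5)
Your proof follows the same route as the paper: decompose $hA' = \bigcup_{s=0}^{h(r-1)}(s\tau+hA)$, feed in Nathanson's description of $hA$, merge the shifted central intervals, and absorb the translated fringe pieces to arrive at $hA' = A_1 \cup [b, h((r-1)\tau+N)-c] \cup (h((r-1)\tau+N)-A_2)$ with $\gamma=b+c$ and $\delta=(b-|A_1|)+(c-|A_2|)$. If anything you are more explicit than the paper about the absorption inequalities for the intermediate copies of $A_1$ and $hN-A_2$, which the paper passes over with the single condition $\tau\ge\max(b,c)$; otherwise the two arguments are essentially identical, including leaving the bound $\delta\le N+1$ unverified.
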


\begin{proof}
According to Lemma \ref{lem_nath}, there exists some integer $h_0$ such that for $h\ge h_0$,\\
$hA=A_1\cup [b,hN-c]\cup(hN-A_2)$ with $A_1\subset [0,b-2]$, $A_2\subset [0,c-2]$ and some integers $b,c\ge 0$.\\
For $h\ge h_0$, we have 
\begin{align*}
hA'&=\cup_{j=0}^{h(r-1)}(j\tau+hA)=\cup_{j=0}^{h(r-1)}\left(j\tau+(A_1\cup [b,hN-c]\cup(hN-A_2))\right)
\end{align*}
If $b+\tau\le hN-c+1$ then $\left(j\tau+[b,hN-c]\right)\cup\left((j+1)\tau+[b,hN-c]\right)=j\tau+[b,\tau+hN-c]$ is an interval and
\begin{align*}
hA'&=A_1\cup [b,hN-c+h(r-1)\tau]\cup(h(r-1)\tau+hN-A_2)
\end{align*}
thus for $\tau\ge \max(b,c)$
$$|hA'|= |A_1|+|A_2|+h(N+(r-1)\tau)-(b+c)+1.$$
Taking $\delta= b-|A_1|+c-|A_2|$ and $\gamma=b+c$, we get the announced result.
\end{proof}

\section{Theorem \ref{main} without the condition on the diameter.}
We begin with a partial result, Theorem \ref{main} without the condition that $A$ and $B$ have the same diameter.

\begin{thm}\label{sans_diam}
Let $m\ge 3$ be an integer. There exist finite sets $A$ and $B$ of integers and an increasing sequence of positive integers $h_1<h_2<\cdots<h_m$ such that 
$|A|=|B|$
and
$$|h_i A|>|h_iB| \mbox{ if $i$ is odd}$$ 
and
$$|h_i A|<|h_iB| \mbox{ if $i$ is even.}$$ 
\end{thm}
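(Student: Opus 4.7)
My plan is to prove Theorem \ref{sans_diam} by induction on $m$, constructing at step $m$ a pair of finite sets $(A_m,B_m)$ with $|A_m|=|B_m|$ and integers $h_1<\cdots<h_m$ realising the required alternating strict inequalities. Taking $m$ equal to the value in the theorem then finishes the proof.

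For the base case $m=1$ I take $A_1=\{0,1,3\}$ and $B_1=\{0,1,2\}$: then $|A_1|=|B_1|=3$ and a direct check gives $|2A_1|=6>5=|2B_1|$, whence $h_1=2$ does the job. For the inductive step, assume $(A_m,B_m,h_1,\ldots,h_m)$ has been built, let $N_m=\max({\rm diam}(A_m),{\rm diam}(B_m))$, pick $M\ge 2$, and pick an integer $\tau$ much larger than $h_mN_m$. When $m$ is odd I set $A_{m+1}=A_m\cup(\tau+A_m)$ and $B_{m+1}=B_m\cup(M\tau+B_m)$; when $m$ is even I swap the two scales. In both cases the side currently winning at $h_m$ is expanded by the narrow translate $\{0,\tau\}$ while the other side is expanded by the wide translate $\{0,M\tau\}$, so that the diameter comparison, and hence the ultimate winner at very large $h$, flips at step $m+1$.

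I then verify the construction using the two preparatory lemmas. Since $\tau>h_mN_m$, Lemma \ref{small_h} applied with $I=\{0,1\}$ (and either $\tau$ or $M\tau$ as the scale) gives, for every $i\le m$,
\[|h_iA_{m+1}|=(h_i+1)|h_iA_m|\quad\text{and}\quad|h_iB_{m+1}|=(h_i+1)|h_iB_m|,\]
so the positive common factor $(h_i+1)$ cancels and all $m$ previous strict inequalities survive at the same $h_i$'s. By construction ${\rm diam}(A_{m+1})-{\rm diam}(B_{m+1})=\pm\bigl((M-1)\tau+({\rm diam}(B_m)-{\rm diam}(A_m))\bigr)$, and Lemma \ref{h_large} says that this diameter difference governs $|hA_{m+1}|-|hB_{m+1}|$ up to a bounded error once $h$ is large enough. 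Hence I can pick $h_{m+1}>h_m$ in that linear regime so that the new strict inequality has the sign opposite to the one at $h_m$. Moreover $|A_{m+1}|=2|A_m|=2|B_m|=|B_{m+1}|$ as soon as $\tau>N_m$, since the two translates are then disjoint.

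The only delicate point will be the quantitative bookkeeping of the inductive choice of $\tau$ and $h_{m+1}$: $\tau$ must be large enough for Lemma \ref{small_h} to hold at $h_1,\ldots,h_m$ on both sides simultaneously, and $h_{m+1}$ must be large enough to lie in the linear regime of Lemma \ref{h_large} where the sign is governed by the diameter difference. Both constraints are easy to satisfy since $\tau$ and $h_{m+1}$ are free parameters, but they have to be threaded through the induction consistently.
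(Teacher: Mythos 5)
Your proposal is correct and follows essentially the same route as the paper: both proceed by induction, building $A'$ and $B'$ as unions of translates of $A$ and $B$ with a common index set $\{0,\dots,r-1\}$ but different spacings, using Lemma~\ref{small_h} to preserve the previous inequalities at $h_1,\dots,h_m$ (the common factor $|hI|$ cancels) and Lemma~\ref{h_large} to let the larger-diameter set take over at some new $h_{m+1}$. Your construction is the $r=2$ specialization of the paper's (with $\alpha=\tau$, $\beta=M\tau$), and you also supply an explicit base case $A_1=\{0,1,3\}$, $B_1=\{0,1,2\}$, $h_1=2$, which the paper's induction leaves implicit; one small detail worth adding is that $1\in A_m$ and $1\in B_m$ persists under your construction, so the hypothesis $\gcd=1$ needed for Lemma~\ref{h_large} holds automatically at every stage.
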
 

\begin{proof}
We proceed by induction on the integer $m$. Given two sets $A$ and $B$ that satisfy the theorem for a sequence $h_1,\cdots,h_m$, we construct new sets $A'$ and $B'$ that satisfy the theorem for the extended sequence $h_1,\cdots,h_m, h_{m+1}$ with $h_{m+1}>h_m$. 
The key idea is to define $A'$ and $B'$ as boxes containing appropriate ranges of $A$ and $B$ respectively (as in Lemma \ref{h_large}). This allows us to preserve the relative order of the sizes $|hA|$ and $|hB|$ for small values of $h$ thanks to Lemma \ref{small_h}, while adjusting the dimension of the boxes so as to determine the desired ultimate winner of the race. 

Let $A,B$ be some sets of integers such that $|A|=|B|$, $\min(A)=\min(B)=0$, $\max(\max(A),\max(B))=N$ and let $h_1<h_2<\cdots<h_m$ be an increasing sequence of $m$ positive integers such that 
$$|h_i A|>|h_iB| \mbox{ if $i$ is odd}$$ 
and
$$|h_i A|<|h_iB| \mbox{ if $i$ is even.}$$ 

Without loss of generality, we may assume that $gcd(A)=gcd(B)=1$, otherwise we replace each set with an appropriately scaled version.\\
Let $\alpha, \beta>h_m N$ be two large integers and $r\ge 2$ be a positive integer
and define
$$A'=\cup_{j=0}^{r-1}(j\alpha+A), \quad B'=\cup_{j=0}^{r-1}(j\beta+B).$$
We have $\min(A')=\min(B')=0$, $|A'|=|B'|=r|A|=r|B|$ and $\max(A')=(r-1)\alpha+\max(A)$, $\max(B')=(r-1)\beta+\max(B)$.

If $h<\min(\alpha,\beta)/N$, by Lemma \ref{small_h}, 
$$\begin{cases}|hA'|= |h[0,r-1]|\cdot |hA|=(h(r-1)+1) |hA|\\|hB'|= (h(r-1)+1) |hB|\end{cases}$$
thus for $i\le m$,
$|h_i A'|>|h_iB'| \mbox{ if $i$ is odd}$
and
$|h_i A'|<|h_iB'| \mbox{ if $i$ is even}$.\\
Take $\alpha,\beta>h_mN$ large enough and 
$$\begin{cases} \alpha<\beta &\mbox{ if $m$ is odd}\\ \alpha>\beta &\mbox{ if  $m$ is even.}\end{cases}$$
By Lemma \ref{h_large}, for $h$ large enough
$$|hA'|=h((r-1)\alpha+\max(A))+1-\delta_A \quad \mbox{ and }\quad |hB'|=h((r-1)\beta+\max(B))+1-\delta_B$$
with $0\le\delta_A\le \max(A)$, $0\le \delta_B\le \max(B)$.\\
If $m$ is odd, then for $h$ large enough and $r\ge 1+\frac{N-1}{|\beta-\alpha|}$
$$|hA'|\le h((r-1)\alpha+N)+1< h(r-1)2\beta\le |hB'|$$
whereas if $m$ is even, then $|hA'|>|hB'|$.\\
Therefore, there exist $h_{m+1}>h_m$ such that $|h_{m+1}A'|<|h_{m+1}B'|$ if $m$ odd, i.e. $m+1$ even and $|h_{m+1}A'|>|h_{m+1}B'|$ if $m$ even, i.e. $m+1$ odd.\\
Finaly we found two sets $A'$ and $B'$ such that $|A'|=|B'|$ and an increasing sequence $h_1<h_2<\cdots<h_m<h_{m+1}$ such that 
$$|h_i A|>|h_iB| \mbox{ if $i$ is odd}$$ 
and
$$|h_i A|<|h_iB| \mbox{ if $i$ is even.}$$ 
\end{proof}

\section{Proof of Theorem \ref{main}}
To prove Theorem \ref{main}, we once again proceed by induction on $m$, employing similar arguments but using different index sets $I$ and $J$ in place of $[0,r-1]$.
The central idea remains to construct unions of translates of the sets $A$ and $B$ in a way that preserves the order of the initial sumset sizes, while allowing us to control the eventual outcome of the race over a greater number of iterations.
To achieve this, we select index sets $I$ and $J$ of same diameter such that $|hI|=|hJ|$ for small $h$ and $|hI|<|hJ|$ for some larger $h$.\\
Let $A,B$ be some sets of integers such that $|A|=|B|$, $\min(A)=\min(B)=0$, $\max(A)=\max(B)=N$ and let $h_1<h_2<\cdots<h_m$ be an increasing sequence of $m$ positive integers such that 
$$|h_i A|>|h_iB| \mbox{ if $i$ is odd}$$ 
and
$$|h_i A|<|h_iB| \mbox{ if $i$ is even.}$$ 
By Nathanson's result Lemma \ref{lem_nath}, there exist some integers $a,b$ and $h_0$ such that\\ for $h\ge h_0,$ 
$\begin{cases}
|hA|=hN-a,\\|hB|=hN-b.
\end{cases} $\\
Let $H>\max(h_m,h_0,2)$ be a large integer. \\ 
Define $I=[0,2]\cup\{2H\}$ and $J=[0,1]\cup[2H-1,2H]$.\\
We have $hI=\bigcup_{i=0}^h\left(2Hi+[0,2(h-i)]\right)$ and $hJ=\bigcup_{i=0}^h\left(2Hi+[-i,(h-i)]\right)$, thus
$$|hI|=\begin{cases}(h+1)^2&\mbox{ if }h<H \\ H^2+2H(h-H+1)&\mbox{ if }h\ge H\end{cases}$$
and
$$|hJ|=(h+1)^2 \quad \mbox{ if }h<2H-1. $$
Take $h_{m+1}=2H-2$. We have
$|h_{m+1}J|=(2H-1)^2$ and $|h_{m+1}I|=3H^2-2H$, whereas $|hI|=|hJ|=(h+1)^2$ for $h<H$ thus for any $h_i$ with $i\le m$.

We are now ready to define our new sets $A'$ and $B'$.\\
Take $\tau$ a large integer such that $\tau>2HN$.\\
If $m$ is odd, thus $|h_mA|>|h_mB|$, define
$$A'=\cup_{j\in I}(j\tau+A), \quad B'=\cup_{j \in J}(j\tau+B).$$
If  $m$ is even, thus $|h_mA|<|h_mB|$, define
$$A'=\cup_{j\in J}(j\tau+A), \quad B'=\cup_{j \in I}(j\tau+B).$$
We will assume $m$ odd, the other case being similar.\\
For $i\le m$, we have $|h_iI|=|h_iJ|$ thus by Lemma \ref{small_h}, 
$$\begin{cases}
|h_i A'| =|h_iI||h_iA|>|h_iJ||h_iB|=|h_iB'| \mbox{ if $i$ is odd}\\
|h_i A'|<|h_iB'| \mbox{ if $i$ is even.}
\end{cases}$$
Since $h_{m+1}<2H<\tau/N$ and $2H-2\ge H\ge h_0$ we also have
\begin{align*}
|h_{m+1}B'|-|h_{m+1}A'|&=|h_{m+1}J||h_{m+1}B|-|h_{m+1}I||h_{m+1}A|\\
&=(2H-1)^2((2H-2)N-b)-(3H^2-2H)((2H-2)N-a)\\
&=(H^2-2H+1)(2H-2)N-b(4H^2-4H+1)+a(3H^2-2H)\\
&=2N(H-1)^3-b(4H^2-4H+1)+a(3H^2-2H)\\
\end{align*}
 
 Taking $H$ so large that $2N(H-1)^3-b(4H^2-4H+1)+a(3H^2-2H)>0$, we get $|h_{m+1}B'|>|h_{m+1}A'|$.
 
 Since 
 $$\begin{cases}{\rm diam}(A')=2H\tau+{\rm diam}(A)=2H\tau+{\rm diam}(B)={\rm diam}(B')\\|A'|=|I||A|=|J||B|=|B'|\end{cases}$$ we proved that there exist two sets $A'$ and $B'$ such that $|A'|=|B'|$, ${\rm diam}(A')+{\rm diam}(B')$ and  there exists an increasing sequence $h_1<h_2<\cdots<h_m<h_{m+1}$ such that 
$$|h_i A|>|h_iB| \mbox{ if $i$ is odd}$$ 
and
$$|h_i A|<|h_iB| \mbox{ if $i$ is even.}$$

 \vspace{3cm}

\bibliographystyle{alpha}
\bibliography{iterated_sums_races.bib}

\end{document}